\newtheorem{theorem}{Theorem}
\newtheorem{proposition}[theorem]{Proposition}
\newtheorem{lemma}[theorem]{Lemma}
\newcommand{\Z}{\mathbb{Z}}
\newcommand{\Q}{\mathbb{Q}}
\newcommand{\R}{\mathbb{R}}
\newcommand{\C}{\mathbb{C}}
\newcommand{\A}{\mathbb{A}}
\renewcommand{\H}{\mathbb{H}}
\DeclareMathOperator{\sgn}{sgn}
\DeclareMathOperator{\GL}{GL}
\begin{document}
\title{Simple zeros of degree $2$ $L$-functions}
\author{Andrew R.~Booker}
\address{School of Mathematics, University of Bristol, University Walk,
Bristol, BS8 1TW, United Kingdom}
\thanks{The author was supported by EPSRC Fellowship EP/H005188/1.}
\begin{abstract}
We prove that the complete $L$-functions of classical holomorphic newforms
have infinitely many simple zeros.
\end{abstract}
\maketitle
\section{Introduction}
Let $\pi$ be a cuspidal automorphic representation of $\GL_n(\A_\Q)$
with corresponding $L$-function $\Lambda(s,\pi)$.  The Grand
Riemann Hypothesis (GRH) and Grand Simplicity Hypothesis (GSH) predict
that the zeros of $\Lambda(s,\pi)$ lie on the line $\Re(s)=\frac12$ and
are simple, apart from at most one multiple zero if $\pi$ is associated
to a geometric motive (cf.\ the BSD conjecture).  These conjectures have
not yet been shown to hold for a single example, and most partial evidence
in their favor has been for $n=1$, i.e.\ the Dirichlet $L$-functions. In
particular, for $n>1$, the only cuspidal representation that we are
aware of for which it is known that $\Lambda(s,\pi)$ has infinitely many
simple zeros is the one associated to the Ramanujan $\Delta$ modular form,
which is a theorem of Conrey and Ghosh \cite{conrey-ghosh} from 1988.

As Conrey and Ghosh remark in their paper, most of their arguments would
apply to any degree $2$ $L$-function, but they were unable to conclude
the proof without assuming \emph{a priori} the existence of at least one
simple zero (which they verified directly for the $L$-function associated
to $\Delta$). In this paper, we analyze their method from a structural
point of view, along the lines of \cite{bk-weil} and \cite{kp},
to prove the following:
\begin{theorem}
\label{mainthm}
Let $f\in S_k(\Gamma_1(N))^{\rm new}$ be a normalized Hecke eigenform
of arbitrary weight and level.  Then the complete
$L$-function $\Lambda_f(s)=\int_0^\infty f(iy)y^{s-1}\,dy$ has infinitely
many simple zeros.
\end{theorem}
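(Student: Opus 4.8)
The plan is to recast the Conrey--Ghosh method so that their \emph{a priori} simple zero is produced intrinsically. First I would record the analytic backbone: $\Lambda_f$ continues to an entire function of order $1$ and satisfies a functional equation $\Lambda_f(s)=\varepsilon\,\Lambda_g(k-s)$, where $g=\bar f$ is the dual newform (conjugate Hecke eigenvalues) and $|\varepsilon|=1$, with all zeros confined to the open strip $0<\Re(s)<k$. Writing $m(\rho)$ for the multiplicity of a zero $\rho$, the basic point is that $\rho$ is simple exactly when $\Lambda_f'(\rho)\neq0$. A convenient device for isolating such zeros is the meromorphic function
\[
\phi_f(s)=\frac{\Lambda_f'(s)^2}{\Lambda_f(s)},
\]
whose Laurent expansion at a zero of order $m$ has leading term $\asymp(s-\rho)^{m-2}$; thus $\phi_f$ has a \emph{pole precisely at the simple zeros}, with residue $\Lambda_f'(\rho)\neq0$, and is holomorphic at every zero of order $\geq2$.

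Differentiating the functional equation gives $\Lambda_f'(s)=-\varepsilon\,\Lambda_g'(k-s)$, and hence the clean relation
\[
\phi_f(s)=\varepsilon\,\phi_g(k-s).
\]
Because $\Lambda_f'/\Lambda_f$ grows only logarithmically on vertical lines away from zeros while $\Lambda_f$ inherits the exponential decay of its $\Gamma$-factor, $\phi_f$ decays rapidly on vertical lines, so for $c>k$ the inverse Mellin transform $I_f(x)=\frac{1}{2\pi i}\int_{(c)}\phi_f(s)\,x^{-s}\,ds$ converges absolutely. Shifting the contour to the left of every zero, collecting residues, and using $\phi_f(s)=\varepsilon\phi_g(k-s)$ to identify the shifted integral with the dual transform yields the explicit formula
\[
I_f(x)=\sum_{\rho\ \mathrm{simple}}\Lambda_f'(\rho)\,x^{-\rho}+\varepsilon\,x^{-k}I_g(1/x).
\]
This identity is the structural heart: it expresses a sum over \emph{simple} zeros as a difference of two transforms built from the $L$-function data of $f$ and of $g$.

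I would then argue by contradiction. Since the poles of $\phi_f$ are exactly the simple zeros, assuming only finitely many of them is equivalent to assuming that $\phi_f$ is meromorphic with finitely many poles, i.e.\ that $\phi_f(s)\prod_j(s-\rho_j)$ is entire of order $1$; equivalently, the sum above collapses to a finite exponential sum $\sum_{j}\Lambda_f'(\rho_j)x^{-\rho_j}$, and the explicit formula then holds rigorously with no convergence issues. One therefore obtains an exact identity between a finite exponential sum and inverse Mellin transforms governed near $\Re(s)=k$ by the Hecke eigenvalues of $f$ and $g$. The tension is that $\phi_f$ still carries the full arithmetic content of $f$ through this expansion, while the finitely-many-poles hypothesis forces a rigid divisibility relation between $\Lambda_f$ and $\Lambda_f'$ and, through it, a rigid constraint on the arithmetic data of $f$. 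This is where the structural viewpoint of \cite{bk-weil} and \cite{kp} enters: such a constraint should be incompatible with the rigidity of genuine degree-$2$ $L$-functions, contradicting the cuspidality of $f$. It is exactly this rigidity that is meant to supply the simple zero that Conrey and Ghosh verified by hand.

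I expect the main obstacle to be precisely this last step: turning ``finitely many poles together with the functional equation $\phi_f(s)=\varepsilon\phi_g(k-s)$'' into a genuine contradiction, uniformly across all weights and levels. Concretely, one must control the transforms $I_f,I_g$ finely enough to exclude the collapse to a finite exponential sum and then invoke the converse-theorem-style classification to rule out the degenerate factorization; the non-self-dual case $g\neq f$ (nontrivial nebentypus), where $\phi_f$ is tied to a genuinely different $L$-function, will require extra care in the dual term. The remaining ingredients --- the entireness and order of $\Lambda_f$, the residue computation for $\phi_f$, and the justification of the contour shift --- are routine once the vertical-line decay is quantified.
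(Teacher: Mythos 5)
Your detector $\phi_f(s)=\Lambda_f'(s)^2/\Lambda_f(s)$ is essentially the paper's own device in disguise: since $\Lambda(\log\Lambda)''=\Lambda''-\Lambda'^2/\Lambda$, your $\phi_f$ differs from (minus) the paper's $D_f$-type function by an entire term, and your residue computation at simple zeros is correct. Two remarks before the main point. First, for level $N>1$ your ``clean relation'' is wrong as stated: differentiating $\Lambda_f(s)=\epsilon N^{\frac{k}2-s}\Lambda_{\bar f}(k-s)$ gives $\Lambda_f'(s)=-\epsilon N^{\frac{k}2-s}\bigl(\log N\cdot\Lambda_{\bar f}(k-s)+\Lambda_{\bar f}'(k-s)\bigr)$, so $\phi_f(s)=\epsilon N^{\frac{k}2-s}\bigl(\phi_{\bar f}(k-s)+(\log N)^2\Lambda_{\bar f}(k-s)+2\log N\,\Lambda_{\bar f}'(k-s)\bigr)$; the extra terms are entire, hence harmless, but this is exactly why the paper uses the \emph{second} logarithmic derivative, which kills the $\log N$ contribution and yields the clean functional equation \eqref{dfunceq}. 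Second, your contour shift and the convergence of the residue sum are fine under the contradiction hypothesis, so the explicit formula itself is believable.

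The fatal gap is your last step, which you candidly flag but do not fill: it is not an obstacle to be smoothed over, it \emph{is} the theorem. Under the hypothesis of finitely many simple zeros your identity reads $I_f(x)=(\text{finite exponential sum})+\varepsilon x^{-k}I_g(1/x)$, and in the extreme case of no simple zeros it reduces to $I_f(x)=\varepsilon x^{-k}I_g(1/x)$, which is merely the modular transformation law for the inverse Mellin transform of $\phi_f$ and is perfectly self-consistent --- no contradiction is visible, and no ``rigid divisibility relation'' follows. This is precisely the impasse of Conrey and Ghosh, whose method converts one known simple zero into infinitely many but produces no seed. The paper's escape has no analogue in your sketch: it works with the Dirichlet series coefficients $c_f(n)$ of $D_f$ (note your $\phi_f$, built from the completed $\Lambda_f$, is not a Dirichlet series, so this step cannot even be launched from it) in order to form the additive twist $D_f(s,1/q)=\sum_{n\ge1}c_f(n)e(n/q)n^{-s}$ for an auxiliary prime $q\nmid N$. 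Expanding $e(n/q)$ in Dirichlet characters mod $q$ shows that the trivial-character piece, via $\sum a_f(n)\chi_0(n)n^{-s}=(1-a_f(q)q^{-s}+\xi(q)q^{k-1-2s})L_f(s)$, forces $D_f(s,1/q)$ to have poles at the \emph{simple zeros of the local Euler factor} at $q$: non-integral points on $\Re(s)=\frac{k-1}2$, infinite in number whenever $|a_f(q)|<2q^{\frac{k-1}2}$, and such $q$ exist in abundance by Rankin--Selberg. Meanwhile Proposition~\ref{mainprop}, combined with Dirichlet's theorem in progressions and a Vandermonde elimination over primes $q_j\equiv q\ (\text{mod }N)$, shows that under your hypothesis $D_f(s,1/q)$ could have poles only at integers --- contradiction. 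That local-factor seed, the structural replacement for Conrey and Ghosh's numerically verified simple zero, is exactly what your proposal lacks, and without it the argument terminates where theirs did.
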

As our proof will show, a lack of simple zeros leads to inconsistencies
unless the local $L$-factor of $\Lambda_f(s)$ is a square at every
unramified prime (which cannot happen for holomorphic modular forms). In
effect, we establish a connection (albeit a very loose one) between the
zeros of the global $L$-function and those of its local factor
polynomials.

Recently, Cho \cite{cho} has generalized \cite{conrey-ghosh} to prove
that the $L$-functions of the first few Maass cusp forms of level $1$
have infinitely many simple zeros.  Our proof could be modified in
an analogous fashion to extend Theorem \ref{mainthm} to all cuspidal
Maass newforms.  Moreover, the assumption that $f$ is a cusp form is also
unnecessary, so in fact the method could be generalized to show that if
$\chi_1$ and $\chi_2$ are primitive Dirichlet characters and $t\in\R$
then $\Lambda(s,\chi_1)\Lambda(s+it,\chi_2)$ has infinitely many simple
zeros unless $\chi_1=\chi_2$ and $t=0$.  However, stronger results of
this type may be obtained by other methods, e.g.\ \cite{cgg}.

Finally we note that Conrey and Ghosh's result for $f=\Delta$
is a bit stronger than the conclusion of Theorem~\ref{mainthm} for
that case.  Precisely, if $N_f(T)$ denotes the number of simple zeros
of $\Lambda_f(s)$ with imaginary part in $[0,T]$, they showed that
for every $\varepsilon>0$, the inequality $N_\Delta(T)\ge
T^{\frac16-\varepsilon}$ holds for some arbitrarily large values of $T$.
With Theorem~\ref{mainthm} in hand, it seems likely that their proof
of this estimate would generalize at least to all eigenforms of level $1$.
However, in this paper we content ourselves with the qualitative statement
of Theorem~\ref{mainthm}.

\subsection*{Acknowledgements}
I thank Brian Conrey for helpful comments.

\subsection*{Notation}
Let $f$ be as in the statement of Theorem~\ref{mainthm}, and let
$\xi$ denote its nebentypus character.
Let
$$
L_f(s)=\sum_{n=1}^\infty a_f(n)n^{-s}
=\prod_p\frac1{1-a_f(p)p^{-s}+\xi(p)p^{k-1-2s}}
$$
be the finite $L$-function of $f$, and
$\Lambda_f(s)=(2\pi)^{-s}\Gamma(s)L_f(s)$ the completed version.
Then we have the functional equation
\begin{equation}
\label{funceq}
\Lambda_f(s)=\epsilon N^{\frac{k}2-s}\Lambda_{\bar{f}}(k-s),
\end{equation}
where $\bar{f}\in S_k(\Gamma_0(N),\overline{\xi})$ is the
dual of $f$, and $\epsilon\in\C$ is the root number.
We define
$$
D_f(s)=L_f(s)\frac{d^2}{ds^2}\log L_f(s)=\sum_{n=1}^{\infty}c_f(n)n^{-s}.
$$
Note that $D_f(s)$
continues meromorphically to $\C$, with poles precisely at the simple
zeros of $L_f(s)$ (including the trivial zeros $s=0, -1, -2, \ldots$).

Next, for any $\alpha\in\Q^\times$, we define the additive twists
$$
L_f(s,\alpha)=\sum_{n=1}^\infty a_f(n)e(\alpha n)n^{-s}
\quad\mbox{and}\quad
D_f(s,\alpha)=\sum_{n=1}^\infty c_f(n)e(\alpha n)n^{-s}.
$$
By Deligne's bound $|a_f(p)|\le 2p^{\frac{k-1}2}$, we see that
each of these is holomorphic for $\Re(s)>\frac{k+1}2$.
Moreover, it follows from \cite[Prop.~3.1]{bk-converse} that
$L_f(s,\alpha)$
continues to an entire function.
One could similarly prove that $D_f(s,\alpha)$ has meromorphic
continuation to $\C$ for every $\alpha$,
but it turns out to be enough for our purposes to 
consider $\alpha=1/q$, where $q$ is a prime number not dividing
$N$.  In this case, we have the following expansion of the exponential
function in terms of Dirichlet characters:
$$
e\!\left(\frac{n}{q}\right)
=1-\frac{q}{q-1}\chi_0(n)
+\frac1{q-1}\sum_{\substack{\chi\;(\text{mod }q)\\\chi\ne\chi_0}}
\tau(\overline{\chi})\chi(n),
$$
where $\chi_0\;(\text{mod }q)$ is the trivial character,
the sum ranges over all non-trivial $\chi\;(\text{mod }q)$,
and $\tau(\overline{\chi})$ denotes the Gauss sum of $\overline{\chi}$.
Multiplying both sides by $c_f(n)n^{-s}$ and summing over $n$, we thus see that
$$
D_f\!\left(s,\frac1q\right)=D_f(s)
-\frac{q}{q-1}D_f(s,\chi_0)
+\frac1{q-1}\sum_{\substack{\chi\;(\text{mod }q)\\\chi\ne\chi_0}}
\tau(\overline{\chi})D_f(s,\chi),
$$
where, for each $\chi$, $D_f(s,\chi)$ denotes the multiplicative twist
$$
D_f(s,\chi)=\sum_{n=1}^\infty c_f(n)\chi(n)n^{-s}.
$$

By the known non-vanishing results
for automorphic $L$-functions \cite{jacquet-shalika},
all poles of $D_f(s)/\Gamma(s)$ and $D_f(s,\chi)/\Gamma(s)$ for
$\chi\ne\chi_0$ are confined to the
critical strip
$\bigl\{s\in\C:\Re(s)\in\bigl(\frac{k-1}2,\frac{k+1}2\bigr)\bigr\}$.
On the other hand, from the formula
$$
\sum_{n=1}^{\infty}a_f(n)\chi_0(n)n^{-s}
=(1-a_f(q)q^{-s}+\xi(q)q^{k-1-2s})L_f(s),
$$
it follows that $D_f(s,\chi_0)$ has a pole at every simple zero of the
local Euler factor polynomial $1-a_f(q)q^{-s}+\xi(q)q^{k-1-2s}$, except
possibly at $s=0$ when $k=1$. By Deligne, the zeros of this polynomial
occur on the line $\Re(s)=\frac{k-1}2$, and they are simple if and only
if the polynomial is not a square.  By the above, we see that $D_f(s,1/q)$
inherits these poles when they occur.

\section{Proof of Theorem~\ref{mainthm}}
The main tool used in the proof is the following proposition, whose
proof we defer until the final section.
\begin{proposition}
\label{mainprop}
Suppose that $\Lambda_f(s)$ has at most finitely many simple
zeros.  Then, for any $\alpha\in\Q^\times$ and $M\in\Z_{\ge0}$,
\begin{equation}
\label{maineq}
\begin{aligned}
D_f(s,\alpha)
-\epsilon(i\sgn(\alpha))^k
(N\alpha^2)^{s-\frac{k}2}
\sum_{m=0}^{M-1}
&m!\left(\frac{iN\alpha}{2\pi}\right)^m
{{s+m-1}\choose{m}}
{{s+m-k}\choose{m}}\\
&\cdot D_{\bar{f}}\!\left(s+m,-\frac1{N\alpha}\right)
\end{aligned}
\end{equation}
continues to a holomorphic function for $\Re(s)>\frac{k+1}2-M$.
\end{proposition}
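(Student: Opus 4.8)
The plan is to establish the stated identity as an additive-twist functional equation for $D_f$, obtained by transferring \eqref{funceq} through the completion and then expanding the archimedean factor. The starting point is that, because $D_f(s)=L_f(s)\frac{d^2}{ds^2}\log L_f(s)$ and $\frac{d^2}{ds^2}\log L_f(s)=\sum_e\Lambda_f(e)(\log e)e^{-s}$ is supported on prime powers $e$, the generating function $d_f(z):=\sum_n c_f(n)e(nz)$ factors as
\[
d_f(z)=\sum_e\Lambda_f(e)(\log e)\,f(ez).
\]
Hence $(2\pi)^{-s}\Gamma(s)D_f(s,\alpha)=\int_0^\infty d_f(\alpha+iy)\,y^{s-1}\,dy=\sum_e\Lambda_f(e)(\log e)\,e^{-s}\,\Lambda_f(s,e\alpha)$ for $\Re(s)>\frac{k+1}2$, which reduces matters to the ordinary additive twists $\Lambda_f(s,\beta)$. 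Each of these is entire \cite{bk-converse} and satisfies the $\GL(2)$ additive-twist functional equation exchanging the cusp $\beta$ with $-1/(N\beta)$; applying it with $\beta=e\alpha$ carries every term to the dual cusp $-1/(Ne\alpha)=\frac1e\cdot\bigl(-\frac1{N\alpha}\bigr)$, so that after resummation the right-hand side is assembled from twists of $\bar f$ at the single cusp $-1/(N\alpha)$.

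To track poles I would pass to the completed quantity $\mathcal E_f(s):=\Lambda_f(s)\frac{d^2}{ds^2}\log\Lambda_f(s)=(2\pi)^{-s}\Gamma(s)D_f(s)+\psi'(s)\Lambda_f(s)$, where $\psi=\Gamma'/\Gamma$. Differentiating $\log$ of \eqref{funceq} twice gives $\frac{d^2}{ds^2}\log\Lambda_f(s)=\frac{d^2}{ds^2}\log\Lambda_{\bar f}(k-s)$, whence $\mathcal E_f(s)=\epsilon N^{\frac k2-s}\mathcal E_{\bar f}(k-s)$: the quantity $\mathcal E_f$ obeys exactly the functional equation \eqref{funceq}. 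Since $\Lambda_f$ is entire, $\mathcal E_f$ is holomorphic apart from simple poles at the simple zeros of $\Lambda_f$, and this is where the hypothesis enters crucially: it forces these poles to be finite in number and confined to a bounded vertical strip. Consequently the asymptotic expansion of $d_f(\alpha+iy)$ as $y\to0^+$ has only finitely many pole-producing power-law terms; and, using the duality $\bar f=f\otimes\overline{\xi}$ — which returns the untwisted zeros $\rho$ of $L_f$ as poles of the twist $D_{\bar f}(s,-1/(N\alpha))$ — these finitely many poles are matched term by term on the two sides, so that the displayed combination is genuinely holomorphic rather than merely meromorphic.

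The sum over $m$ then arises from the archimedean factor. Completing $D_f$ and $D_{\bar f}$ introduces a ratio of Gamma functions which, together with the Fricke scaling, contributes the prefactor $\epsilon(i\sgn(\alpha))^k(N\alpha^2)^{s-\frac k2}$ — the root number and $(i\sgn(\alpha))^k$ coming from \eqref{funceq} and the weight-$k$ automorphy factor — times a transform whose asymptotic expansion has coefficients
\[
m!\binom{s+m-1}{m}\binom{s+m-k}{m}=\frac{(s)_m\,(s-k+1)_m}{m!},
\]
the coefficients of the ${}_2F_0$-type series attached to that Gamma ratio, with the parameter $\frac{iN\alpha}{2\pi}$ entering at the $m$-th stage. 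Truncating at $m=M-1$ and writing the remainder as an explicit integral should yield holomorphy for $\Re(s)>\frac{k+1}2-M$, each additional term extending the region by one unit. I expect the main obstacle to be precisely the control of this remainder: one must prove the uniform estimates giving the full gain of one half-plane per order of truncation (down to $\frac{k+1}2-M$, rather than the naive abscissa $\frac{k+3}2-M$ of the last retained summand), while simultaneously verifying, via the finiteness hypothesis, that the finitely many residual poles cancel across the functional equation so that none survive in the asserted region.
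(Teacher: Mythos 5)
Your reduction to ordinary additive twists breaks at the resummation step, and this is the heart of the matter. The decomposition $F(z)=\sum_e\Lambda_f(e)(\log e)f(ez)$ and the resulting formula $(2\pi)^{-s}\Gamma(s)D_f(s,\alpha)=\sum_e\Lambda_f(e)(\log e)e^{-s}\Lambda_f(s,e\alpha)$ are fine, but applying the additive-twist functional equation to each term sends the point $e\alpha$ to $-1/(Ne\alpha)=\frac1e\bigl(-\frac1{N\alpha}\bigr)$ --- a \emph{division} of the cusp, with denominator growing in $e$ --- whereas reassembling the target object requires the opposite: writing $\beta=-1/(N\alpha)$, one has $D_{\bar f}(s+m,\beta)=\sum_e\Lambda_{\bar f}(e)(\log e)e^{-s-m}L_{\bar f}(s+m,e\beta)$, i.e.\ twists at the \emph{dilations} $e\beta$, weighted moreover by the von Mangoldt-type coefficients of $\bar f$, not of $f$. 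Concretely, your dual-side sum contains terms $a_{\bar f}(d)\,e\bigl(-\frac{d}{e}\cdot\frac1{N\alpha}\bigr)$ with $d/e$ non-integral, and no rearrangement turns this into a Fourier series $\sum_n c_{\bar f}(n)e(\beta n)n^{-s}$; equally, under $s\mapsto k-s$ the factor $e^{-s}$ in the $e$-th term would need to become $e^{s-k}$ with $\Lambda_f(e)$ replaced by $\Lambda_{\bar f}(e)$, which no term-by-term functional equation delivers --- only the full product $L_f(s)\cdot\frac{d^2}{ds^2}\log L_f(s)$ transforms correctly. This is exactly why the paper never decomposes: it proves the single global identity of Lemma \ref{mainidlemma}, $F(z)+A(z)=\epsilon(-i\sqrt{N}z)^{-k}\overline{F}(-1/(Nz))+B(z)$, by Mellin inversion and a contour shift across the critical strip, and that shift is where the hypothesis enters (holomorphy of $\Delta_{\bar f}(s)$ for $\Re(s)>\frac{k+1}2-\delta$, with the finitely many simple-zero poles of $\Delta_f$ swept into the compact rectangle $\mathcal{C}$). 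Only afterwards is $\overline{F}(-1/(Nz))$ expanded at the cusp $-1/(N\alpha)$, which is where the $m$-sum comes from; your identification of the coefficients as $(s)_m(s-k+1)_m/m!$ is correct, the factor $m!\binom{s+m-1}{m}=\Gamma(s+m)/\Gamma(s)$ arising from the completed-to-finite normalization and $\binom{s+m-k}{m}$ from a Chu--Vandermonde identity.

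Your pole bookkeeping is also not how the holomorphy is obtained, and the analytic content you defer is the actual proof. There is no term-by-term matching or cancellation of residual poles across the functional equation: in the paper the finitely many simple zeros contribute the rectangle integral $B(z)$, which by Lemma \ref{Blemma} admits an expansion $\sum_j P_j(\alpha)y^j$ in nonnegative integer powers of $y$ and hence has an \emph{entire} Mellin transform --- the poles are converted into harmless polynomial terms, not cancelled. Likewise the archimedean correction $\Lambda_f(s)(\psi'(s)-\psi'(k-s))$ (which your $\mathcal{E}_f(s)=\Delta_f(s)+\psi'(s)\Lambda_f(s)$ does encode correctly; your functional equation for $\mathcal{E}_f$ is equivalent to \eqref{dfunceq}) produces the term $A(z)$, whose Mellin transform is shown entire only via the recursive integration-by-parts scheme with the functions $\phi_j$ in Lemma \ref{Alemma}; and the uniform truncation estimates giving holomorphy down to $\Re(s)>\frac{k+1}2-M$ (via Lemma \ref{Fbarlemma}, the Mellin transform of the remainder function $g(y)$, and the final replace-$M$-by-$M+1$ step) are precisely what you flag as ``the main obstacle'' without supplying them. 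So even setting aside the broken resummation, what remains is a statement of the expected shape rather than a proof.
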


From now on we will assume that $\Lambda_f(s)$ has at most finitely
many simple zeros and attempt to reach a contradiction.  To that end,
let $M$ be a positive integer and $q$ a prime not dividing $N$.
By Dirichlet's theorem, there are
distinct primes $q_1,\ldots,q_M\in q+N\Z$, and it follows that
$D_{\bar{f}}(s,-q_j/N)=D_{\bar{f}}(s,-q/N)$ for every $j$.
Thus, applying Prop.~\ref{mainprop} with $\alpha=1/q_j$, we obtain that
\begin{equation}
\label{qjhol}
\left(\frac{N}{q_j^2}\right)^{\frac{k}2-s}
D_f\!\left(s,\frac1{q_j}\right)
-\epsilon i^k\sum_{m=0}^{M-1}
m!\left(\frac{iN}{2\pi q_j}\right)^m
{{s+m-1}\choose{m}}
{{s+m-k}\choose{m}}
D_{\bar{f}}\!\left(s+m,-\frac{q}{N}\right)
\end{equation}
is holomorphic for $\Re(s)>\frac{k+1}2-M$.

Next let $m_0\in\Z$ with $0\le m_0<M$. By the Vandermonde determinant,
there are numbers $c_1,\ldots,c_M\in\Q$ such that
$$
\sum_{j=1}^M c_jq_j^{-m}=
\begin{cases}
1&\text{if }m=m_0,\\
0&\text{if }m\ne m_0
\end{cases}
\quad\text{for every }
m\in\Z\cap[0,M).
$$
Multiplying \eqref{qjhol} by $-c_j$, summing over $j$ and
replacing $s$ by $s-m_0$, we find that
$$
\epsilon i^k
m_0!\left(\frac{iN}{2\pi}\right)^{m_0}
{{s-1}\choose{m_0}}
{{s-k}\choose{m_0}}
D_{\bar{f}}\!\left(s,-\frac{q}{N}\right)
-\sum_{j=1}^Mc_j
\left(\frac{N}{q_j^2}\right)^{\frac{k}2+m_0-s}
D_f\!\left(s-m_0,\frac1{q_j}\right)
$$
is holomorphic for $\Re(s)>m_0+\frac{k+1}2-M$.
This establishes the meromorphic continuation of $D_{\bar{f}}(s,-q/N)$
to that region.
Moreover, since $D_f(s,1/q_j)$ is holomorphic
on $\{s\in\C:\Re(s)<\frac{k-1}2\}\setminus\Z$ for each $j$, we see that
$D_{\bar{f}}(s,-q/N)$ is holomorphic
on $\{s\in\C:\Re(s)\in(m_0+\frac{k+1}2-M,m_0+\frac{k-1}2)\}\setminus\Z$.
Thus, choosing $m_0=2$ and $M$ arbitrarily large, we find that
$D_{\bar{f}}(s,-q/N)$ has meromorphic continuation to $\C$, with poles
possible only at integer points.

Hence, applying Prop.~\ref{mainprop} again with $\alpha=1/q$ and
$M=2$, we learn that $D_f(s,1/q)$ can only have poles at integer
points.  However, we have already seen that $D_f(s,1/q)$ has a pole
at every simple zero (except possibly $s=0$) of the local Euler factor
polynomial $1-a_f(q)q^{-s}+\xi(q)q^{k-1-2s}$. This polynomial, in turn, has
infinitely many simple zeros along the line $\Re(s)=\frac{k-1}2$ if and
only if $|a_f(q)|<2q^{\frac{k-1}2}$.  By the Rankin--Selberg method,
the average value of $|a_f(q)|^2/q^{k-1}$ is $1$, so such primes $q$
exist in abundance.  This concludes the proof of Theorem~\ref{mainthm}.

\section{Proof of Proposition~\ref{mainprop}}
Let $\Delta_f(s)=(2\pi)^{-s}\Gamma(s)D_f(s)$.
Taking the logarithm of \eqref{funceq} and differentiating twice, we find
$$
\psi'(s)+\frac{d^2}{ds^2}\log L_f(s)
=\psi'(k-s)+\frac{d^2}{ds^2}\log L_{\bar f}(k-s),
$$
where $\psi(s)=\frac{\Gamma'}{\Gamma}(s)$ is the digamma function.
Thus, it follows that
\begin{equation}
\label{dfunceq}
\Delta_f(s)+\Lambda_f(s)(\psi'(s)-\psi'(k-s))
=\epsilon N^{\frac{k}2-s}\Delta_{\bar{f}}(k-s).
\end{equation}
Next, since $\Lambda_f(s)$ has at most finitely many simple zeros,
there is a rectangle $\mathcal{C}$ contained within the critical
strip $\{s\in\C:\Re(s)\in(\frac{k-1}2,\frac{k+1}2)\}$ which encloses
all simple zeros. For $z\in\H=\{z\in\C:\Im(z)>0\}$, we define
$$
F(z)=\sum_{n=1}^\infty c_f(n)e(nz),\quad
\overline{F}(z)=\sum_{n=1}^\infty c_{\bar{f}}(n)e(nz),
$$
$$
A(z)=\frac1{2\pi i}\int_{\Re(s)=k-\frac12}
\bigl(\psi'(s)+\psi'(s+1-k)\bigr)\Lambda_f(s)(-iz)^{-s}\,ds,
$$
and
$$
B(z)=\frac1{2\pi i}\int_{\mathcal{C}}\Delta_f(s)(-iz)^{-s}\,ds
+\frac1{2\pi i}\int_{\Re(s)=k-\frac12}\frac{\pi^2}{\sin^2(\pi s)}
\Lambda_f(s)(-iz)^{-s}\,ds.
$$
Here $\mathcal{C}$ is given counter-clockwise orientation, and
$(-iz)^{-s}$ is defined as $e^{-s\log(-iz)}$ using the principal
branch of the logarithm.

These functions are related as follows:
\begin{lemma}
\label{mainidlemma}
We have
\begin{equation}
\label{mainidentity}
F(z)+A(z)
=\epsilon(-i\sqrt{N}z)^{-k}\overline{F}\!\left(-\frac1{Nz}\right)+B(z)
\end{equation}
for all $z\in\H$.
\end{lemma}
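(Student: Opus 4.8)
The plan is to recognize each object as an inverse Mellin transform along a vertical line and to pass between the two sides of \eqref{mainidentity} using the differentiated functional equation \eqref{dfunceq}, shifting contours across the critical strip so that the residues at the (finitely many) simple zeros of $\Lambda_f(s)$ are collected into the rectangle integral in $B(z)$. First I would record two Mellin representations. Since $\Delta_f(s)=(2\pi)^{-s}\Gamma(s)D_f(s)$ and $\frac1{2\pi i}\int_{(\sigma)}\Gamma(s)w^{-s}\,ds=e^{-w}$ for $\Re(w)>0$, applying this with $w=-2\pi inz$ (note $\Re(-iz)>0$ for $z\in\H$) gives $F(z)=\frac1{2\pi i}\int_{(\sigma)}\Delta_f(s)(-iz)^{-s}\,ds$ for $\sigma$ large, and likewise $\overline F(w)=\frac1{2\pi i}\int_{(\sigma')}\Delta_{\bar f}(u)(-iw)^{-u}\,du$. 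I would justify the interchange and all later shifts with the standard bounds: $\Gamma$ and $\pi^2/\sin^2(\pi s)$ decay exponentially in $|\Im(s)|$, $\Lambda_f$ and $\Delta_f$ grow at most polynomially in vertical strips, and $|(-iz)^{-s}|=e^{-\Re(s)\log|z|+\Im(s)\arg(-iz)}$ with $|\arg(-iz)|<\pi/2$, so the horizontal segments vanish.

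Next I would substitute $w=-1/(Nz)\in\H$ and change variables $u=k-s$. The branch bookkeeping is clean once one notes $(-iz)\cdot\bigl(i/(Nz)\bigr)=1/N$, whence, both factors lying in the right half-plane, $\log\bigl(i/(Nz)\bigr)=-\log N-\log(-iz)$ for the principal branch; together with $k\in\Z$ this gives
\[
\epsilon(-i\sqrt N z)^{-k}\bigl(i/(Nz)\bigr)^{-u}
=\epsilon N^{\frac{k}2-s}(-iz)^{-s}\qquad(u=k-s).
\]
Hence $\epsilon(-i\sqrt N z)^{-k}\overline F(-1/(Nz))=\frac1{2\pi i}\int_{(k-\sigma')}\epsilon N^{\frac{k}2-s}\Delta_{\bar f}(k-s)(-iz)^{-s}\,ds$. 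I would then apply \eqref{dfunceq} to replace the integrand by $G(s)(-iz)^{-s}$, where $G(s):=\Delta_f(s)+\Lambda_f(s)\bigl(\psi'(s)-\psi'(k-s)\bigr)$, and shift the contour rightward from $\Re(s)=k-\sigma'$ to $\Re(s)=k-\tfrac12$.

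The heart of the matter is the pole accounting in this shift. I would first check that $G$ is holomorphic at the non-positive integers: there the apparent double poles of $\Delta_f$ (from $\Gamma(s)$ against the trivial-zero pole of $D_f$) and of $\Lambda_f(s)\psi'(s)$ cancel, as is visible from the alternative expression $G(s)=\epsilon N^{\frac{k}2-s}\Delta_{\bar f}(k-s)$, which is manifestly regular there. Thus the only poles crossed are the simple zeros $\rho$ of $\Lambda_f$; since $G-\Delta_f=\Lambda_f\cdot(\psi'(s)-\psi'(k-s))$ is holomorphic throughout the critical strip, $\Res_{s=\rho}G=\Res_{s=\rho}\Delta_f$, and the collected residues assemble into $\frac1{2\pi i}\int_{\mathcal C}\Delta_f(s)(-iz)^{-s}\,ds$, the first term of $B(z)$. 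Finally, on the line $\Re(s)=k-\tfrac12$ I would split $G$ using $\psi'(k-s)=\frac{\pi^2}{\sin^2(\pi s)}-\psi'(s+1-k)$, valid because $k\in\Z$ forces $\sin^2\pi(s+1-k)=\sin^2\pi s$; this turns $\frac1{2\pi i}\int_{(k-1/2)}G(-iz)^{-s}\,ds$ into $\frac1{2\pi i}\int_{(k-1/2)}\Delta_f(-iz)^{-s}\,ds+A(z)-[\text{second term of }B(z)]$, and (for $k\ge2$) shifting the first piece back to $\Re(s)=\sigma$ crosses no poles and identifies it with $F(z)$. Collecting terms yields $\epsilon(-i\sqrt N z)^{-k}\overline F(-1/(Nz))=F(z)+A(z)-B(z)$, which rearranges to \eqref{mainidentity}.

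I expect the principal obstacle to be precisely this residue bookkeeping at the borderline weights. For $k\ge2$ the line $\Re(s)=k-\tfrac12$ lies strictly to the right of the entire critical strip, so every simple zero is crossed on the $\overline F$-side and none on the $F$-side, and the accounting is as above. For $k=1$ the pivot line $\Re(s)=\tfrac12$ runs through the middle of the strip, so simple zeros with $\Re(\rho)>\tfrac12$ are instead crossed when identifying $\int_{(1/2)}\Delta_f$ with $F$; one must verify that these contributions re-enter with the correct sign and are still absorbed into the single rectangle integral $\int_{\mathcal C}\Delta_f$, using the symmetry of the zeros under $s\mapsto k-s$. The remaining technical points are the uniform control of the horizontal segments throughout the shifts and the consistent choice of the principal branch of $(-iz)^{-s}$.
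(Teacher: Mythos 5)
Your proposal is correct in substance and follows the same overall strategy as the paper's proof: Mellin representations of $F$ and $\overline{F}$, passage through the differentiated functional equation \eqref{dfunceq}, the reflection formula to split off $A(z)$ and the second term of $B(z)$ at the line $\Re(s)=k-\frac12$, and absorption of the simple-zero residues into the rectangle integral over $\mathcal{C}$. The one genuine difference is organizational, and it is precisely where your version is weaker. You apply \eqref{dfunceq} on a far-left line and shift the full integrand $G(s)=\Delta_f(s)+\Lambda_f(s)\bigl(\psi'(s)-\psi'(k-s)\bigr)$ rightward across the entire critical strip to the pivot $\Re(s)=k-\frac12$, collecting residues at the simple zeros; the paper instead shifts the $\overline{F}$-side only to $\Re(s)=\frac{k+1-\delta}{2}$, with $\delta$ chosen (using finiteness of the simple zeros) so that $\Delta_{\bar f}$ is holomorphic for $\Re(s)>\frac{k+1}2-\delta$, reflects to $\Re(s)=\frac{k-1+\delta}2$ just inside the left edge of the strip, realizes the rectangle term as the \emph{difference} of the two $\Delta_f$ line integrals (so no contour ever crosses or meets a pole), and then moves only the combination $\Lambda_f(s)\bigl(\psi'(s)-\psi'(k-s)\bigr)$, holomorphic on $0<\Re(s)<k$, across to $k-\frac12$; this works uniformly for all $k\ge1$. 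Your arrangement is fine for $k\ge2$, and your $k=1$ sign bookkeeping is in fact correct — the residues picked up on the two sides of the pivot enter with the same sign and assemble into the single counterclockwise integral over $\mathcal{C}$, as you suspected — but as written your pivot line $\Re(s)=\frac12$ may pass \emph{through} simple-zero poles of $G$ and $\Delta_f$ when $k=1$, and the symmetry of the zeros under $s\mapsto k-s$ does not repair that; the correct (easy) fix is to perturb the pivot to a nearby vertical line avoiding the finitely many simple zeros and perform the reflection-formula split afterwards, or simply to adopt the paper's ordering. One further caveat: the rapid vertical decay of $\Delta_{\bar f}$, which you wave at as ``standard bounds,'' is exactly where the paper invokes Phragm\'en--Lindel\"of together with the holomorphy of $\Delta_{\bar f}$ for $\Re(s)>\frac{k+1}2-\delta$; since $D_f$ involves $(\log L_f)''$, this is not a direct Dirichlet-series estimate inside the strip and does require the finitely-many-simple-zeros hypothesis.
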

\begin{proof}
By Mellin inversion, we have
$$
F(z)=\frac1{2\pi i}\int_{\Re(s)=\frac{k}2+1}\Delta_f(s)(-iz)^{-s}\,ds
$$
and
$$
\epsilon(-i\sqrt{N}z)^{-k}\overline{F}\!\left(-\frac1{Nz}\right)
=\frac{\epsilon N^{k/2}}{2\pi i}
\int_{\Re(s)=\frac{k}2+1}\Delta_{\bar{f}}(s)(-iNz)^{s-k}\,ds.
$$
Since $\Lambda_f(s)$ has at most finitely many simple zeros,
there is a $\delta>0$ such that
$\Delta_{\bar{f}}(s)$ is holomorphic for $\Re(s)>\frac{k+1}2-\delta$.
Moreover, it follows from the Phragm\'en--Lindel\"of convexity principle
that for any fixed $z$, $\Delta_{\bar{f}}(s)(-iNz)^{s-k}$ decays rapidly
as $|\Im(s)|\to\infty$ in any fixed vertical strip.
Hence, we may shift the contour of the last line to
$\Re(s)=\frac{k+1-\delta}2$ and apply \eqref{dfunceq} to obtain
\begin{equation}
\label{fbarint}
\begin{aligned}
&\frac{\epsilon N^{k/2}}{2\pi i}
\int_{\Re(s)=\frac{k+1-\delta}2}\Delta_{\bar{f}}(s)(-iNz)^{s-k}\,ds
=\frac{\epsilon N^{k/2}}{2\pi i}
\int_{\Re(s)=\frac{k-1+\delta}2}
\Delta_{\bar{f}}(k-s)(-iNz)^{-s}\,ds\\
&=\frac1{2\pi i}\int_{\Re(s)=\frac{k-1+\delta}2}\Delta_f(s)(-iz)^{-s}\,ds
+\int_{\Re(s)=\frac{k-1+\delta}2}\Lambda_f(s)[\psi'(s)-\psi'(k-s)]
(-iz)^{-s}\,ds.
\end{aligned}
\end{equation}

Note that
$$
\frac1{2\pi i}\int_{\Re(s)=\frac{k}2+1}\Delta_f(s)(-iz)^{-s}\,ds
-\frac1{2\pi i}\int_{\Re(s)=\frac{k-1+\delta}2}\Delta_f(s)(-iz)^{-s}\,ds
=\frac1{2\pi i}\int_{\mathcal{C}}\Delta_f(s)(-iz)^{-s}\,ds,
$$
which is the first term of $B(z)$.
Next, since $\psi'(s)-\psi'(k-s)$ is holomorphic for
$\Re(s)\in(0,k)$, we may shift the contour of the last integral
in \eqref{fbarint} to $\Re(s)=k-\frac12$.
Using the reflection formula
$\psi'(1-s)+\psi'(s)=\pi^2/\sin^2(\pi s)$,
we have
$$
\psi'(s)-\psi'(k-s)=\psi'(s)+\psi'(s+1-k)-\frac{\pi^2}{\sin^2(\pi s)}.
$$
This yields $A(z)$ and the remaining term of $B(z)$.
\end{proof}

Now, the main idea of the proof of Prop.~\ref{mainprop}
is to compute $(2\pi)^s/\Gamma(s)$ times the Mellin
transform of both sides of \eqref{mainidentity} along the line
$\Re(z)=\alpha\in\Q^\times$. For $F(z)$, we have
\begin{equation}
\label{Fmellin}
\begin{aligned}
\frac{(2\pi)^s}{\Gamma(s)}\int_0^\infty F(\alpha+iy)y^s\frac{dy}y
&=\frac{(2\pi)^s}{\Gamma(s)}\int_0^\infty
\sum_{n=1}^\infty c_f(n)e(\alpha n)e^{-2\pi ny}y^s\frac{dy}y\\
&=\sum_{n=1}^\infty c_f(n)e(\alpha n)n^{-s}
=D_f(s,\alpha).
\end{aligned}
\end{equation}

\begin{lemma}
\label{Alemma}
For any $\alpha\in\Q^\times$,
$$
\frac{(2\pi)^s}{\Gamma(s)}\int_0^\infty A(\alpha+iy)y^s\frac{dy}y
$$
continues to an entire function of $s$.
\end{lemma}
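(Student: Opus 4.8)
The plan is to insert the defining contour integral for $A$ and interchange the order of integration. Writing $w$ for the variable in the definition of $A$, and using that $-iz=y-i\alpha$ when $z=\alpha+iy$, the inner integral is the standard Mellin transform of $(y+a)^{-w}$ with $a=-i\alpha$:
$$
\int_0^\infty(y-i\alpha)^{-w}y^{s-1}\,dy
=(-i\alpha)^{s-w}\frac{\Gamma(s)\Gamma(w-s)}{\Gamma(w)},
$$
valid for $0<\Re(s)<\Re(w)=k-\frac12$. Since $\Lambda_f(w)=(2\pi)^{-w}\Gamma(w)L_f(w)$, the factor $\Gamma(s)$ cancels against the prefactor $1/\Gamma(s)$, and after interchanging I obtain the Mellin--Barnes integral
$$
I(s)=\frac1{2\pi i}\int_{\Re(w)=k-\frac12}
\bigl(\psi'(w)+\psi'(w+1-k)\bigr)L_f(w)\,\Gamma(w-s)\,(-2\pi i\alpha)^{s-w}\,dw.
$$
First I would establish this identity, the interchange being justified by the convergence estimates discussed below.

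To prove that $I(s)$ is entire I would move the line of integration. Because $f$ is a cusp form, $\Lambda_f$ and hence $L_f$ is entire, and the only poles of the integrand lying to the left of $\Re(w)=k-\frac12$ are the double poles of $\psi'(w)$ and $\psi'(w+1-k)$ at $w\in\{0,-1,\dots\}\cup\{k-1,k-2,\dots\}$, all of which have $\Re(w)\le k-1$. The sole $s$-dependence sits in $\Gamma(w-s)(-2\pi i\alpha)^{s-w}$, with $(-2\pi i\alpha)^s$ an entire, nonvanishing factor; for $\Re(s)<k-\frac12$ the poles of $\Gamma(w-s)$ at $w=s-j$ lie to the left of the contour, so $I(s)$ is holomorphic there. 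As $\Re(s)$ crosses $k-\frac12+j$ I pick up the residue
$$
\frac{(-1)^j}{j!}\bigl(\psi'(s-j)+\psi'(s-j+1-k)\bigr)L_f(s-j)(-2\pi i\alpha)^{j}.
$$
In the strip $\Re(s)\in(k+J-\frac32,k+J-\frac12)$ with $J\ge1$, the continuation is $I(s)$ plus the terms with $0\le j\le J-1$; the only integer in this strip is $s=k+J-1$, and there none of these finitely many residue terms is singular, since the double poles of $\psi'(s-j)$ and $\psi'(s-j+1-k)$ first occur at $j=s$ and $j=s+1-k$, both $\ge J$. Hence the continuation is holomorphic in every strip, and matching across the (half-integer) strip boundaries shows that $I(s)$ extends to an entire function.

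The main obstacle is convergence, and it is genuinely delicate here because $\alpha$ is real, so that $-2\pi i\alpha$ is purely imaginary and $\arg(-2\pi i\alpha)=\pm\frac\pi2$. By Stirling's formula $\Gamma(w-s)$ decays like $e^{-\frac\pi2|\Im(w)|}$ on the contour, but along one of the two vertical directions this is cancelled exactly by the growth of $(-2\pi i\alpha)^{s-w}$, leaving only polynomial control; consequently the region of absolute convergence of $I(s)$ does not obviously overlap the strip $0<\Re(s)<k-\frac12$ in which the Mellin transform of $A$ converges. To bridge this gap I would combine Stirling's formula with the bound $\psi'(w)+\psi'(w+1-k)=O(|w|^{-1})$ and the Phragm\'en--Lindel\"of convexity bound for $L_f$ on vertical lines, and exploit the oscillation of the integrand (integrating by parts in $w$) to secure the requisite conditional convergence and to justify both the interchange of integrals and the shifting of contours. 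With these estimates in hand the residue computation above is legitimate throughout, establishing that the transform continues to an entire function of $s$.
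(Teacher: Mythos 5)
Your reduction to the Mellin--Barnes integral $I(s)$ is formally correct, and the residue bookkeeping for moving $\Re(s)$ to the right is consistent, but there is a genuine gap at the center of the argument: the claim that ``for $\Re(s)<k-\frac12$ the poles of $\Gamma(w-s)$ at $w=s-j$ lie to the left of the contour, so $I(s)$ is holomorphic there.'' Absence of crossing poles is not convergence. On the half of the line $\Re(w)=k-\frac12$ where $\sgn(\Im w)=-\sgn(\alpha)$, the factor $(-2\pi i\alpha)^{s-w}$ grows like $e^{\frac{\pi}{2}|\Im w|}$ and cancels the Stirling decay of $\Gamma(w-s)$ \emph{exactly}, as you yourself note; what remains has modulus roughly $|\Im w|^{k-2-\Re(s)}|L_f(k-\frac12+i\Im w)|$, which diverges in every left half-plane. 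So the integral defining $I(s)$ converges only in a right half-plane (roughly $\Re(s)>k-1$ plus the convexity exponent of $L_f$), and your strips-and-residues argument then continues the transform to the right --- the direction in which the Mellin transform of $A$ already converges and nothing needs proving. The entire content of the lemma is the continuation to the \emph{left}, and your proposal supplies no mechanism for it. The appeal to oscillation in your final paragraph cannot close this: the residual phase of $\Gamma(w-s)(-2\pi i\alpha)^{-w}$ is of the shape $e^{it\log t}$, which is stationary against the phase $n^{-it}$ of the $n$-th term of $L_f(w)$ precisely at $t\approx 2\pi|\alpha|n$; moreover $\frac{d}{dt}L_f(k-\frac12+it)$ is as large as $L_f$ itself, so integration by parts in $w$ gains only factors of $1/\log|\Im w|$. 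The stationary-phase contributions reassemble into $\sum_n a_f(n)e(\alpha n)n^{-s-j-1}$, i.e.\ into the additive twists $L_f(s+j+1,\alpha)$, exactly where they diverge.

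This is not a technical nuisance but the crux: the entirety of the additive twists $L_f(s,\alpha)$ is the one genuinely modular input, whereas your argument consumes only the entirety and vertical-line growth of $\Lambda_f$ --- data for which the conclusion is false in general, since continuation of all additive twists essentially characterizes automorphy (cf.\ converse theorems). The paper's proof makes this input explicit: it writes $\Phi(s)=\psi'(s)+\psi'(s+1-k)=\int_1^\infty\phi(x)x^{-s}\,dx$ with $\phi(x)=\frac{(x^{k-1}+1)\log x}{x-1}$, unfolds $A(z)=\sum_n a_f(n)\int_1^\infty\phi(x)e(nxz)\,dx$, and integrates by parts $m$ times in $x$ --- there each step gains a genuine power of $n$ because the amplitude $\phi_j(x,s)$ is independent of $n$ --- leaving boundary terms $\frac{\phi_j(1,s)}{(-2\pi i\alpha)^{j+1}}L_f(s+j+1,\alpha)$ plus a remainder holomorphic for $\Re(s)>k-m$; entirety then follows by citing \cite[Prop.~3.1]{bk-converse} for the twists and letting $m\to\infty$. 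To repair your approach you would have to import that same result (or an equivalent Voronoi-type identity) to handle the stationary points, at which stage you would have reproduced the paper's proof in Mellin--Barnes dress.
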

\begin{proof}
Set $\Phi(s)=\psi'(s)+\psi'(s+1-k)$.
From the identity
$\psi'(s)=\int_1^\infty\frac{\log{x}}{x-1}x^{-s}\,dx$,
we get the integral representation
$\Phi(s)=\int_1^\infty\phi(x)x^{-s}\,dx$ for $\Re(s)>k-1$, where
$\phi(x)=\frac{(x^{k-1}+1)\log{x}}{x-1}$.
Hence
\begin{align*}
\Phi(s)\Gamma(s)
&=\int_1^\infty\phi(x)\int_0^\infty e^{-y}(y/x)^s\frac{dy}y\,dx
=\int_1^\infty\phi(x)\int_0^\infty e^{-xy}y^s\frac{dy}y\,dx\\
&=\int_0^\infty\int_1^\infty\phi(x)e^{-xy}\,dx\,y^s\frac{dy}y.
\end{align*}
Therefore, by Mellin inversion,
$$
A(z)=\frac1{2\pi i}\int_{\Re(s)=k+1}\Phi(s)\Gamma(s)
\sum_{n=1}^\infty a_f(n)(-2\pi inz)^{-s}\,ds
=\sum_{n=1}^\infty a_f(n)\int_1^\infty\phi(x)e(nxz)\,dx.
$$
Specializing to $z=\alpha+iy$, we get
$$
A(\alpha+iy)=\sum_{n=1}^\infty a_f(n)\int_1^\infty\phi(x)e(\alpha nx)
e^{-2\pi nxy}\,dx,
$$
so that
\begin{align*}
\int_0^\infty A(\alpha+iy)y^s\frac{dy}y
&=\sum_{n=1}^\infty a_f(n)\int_1^\infty\phi(x)e(\alpha nx)
\int_0^\infty e^{-2\pi nxy}y^s\frac{dy}y\,dx\\
&=\sum_{n=1}^\infty a_f(n)(2\pi n)^{-s}\Gamma(s)
\int_1^\infty\phi(x)e(\alpha nx)x^{-s}\,dx.
\end{align*}

For $j=0,1,2,\ldots$, define
functions $\phi_j=\phi_j(x,s)$ recursively by
$$
\phi_0=\phi,
\quad\phi_{j+1}=x\frac{\partial\phi_j}{\partial x}-(s+j)\phi_j.
$$
Then, by integration by parts,
$$
\int_1^\infty\phi_j(x,s)e(\alpha nx)x^{-s-j}\,dx
=-\frac{e(\alpha n)\phi_j(1,s)}{2\pi i\alpha n}
-\frac1{2\pi i\alpha n}\int_1^\infty\phi_{j+1}(x,s)e(\alpha nx)x^{-s-j-1}\,dx.
$$
Applying this iteratively $m$ times, we find
\begin{align*}
\int_1^\infty\phi(x)e(\alpha nx)x^{-s}\,dx
&=e(\alpha n)\sum_{j=0}^{m-1}\frac{\phi_j(1,s)}{(-2\pi i\alpha n)^{j+1}}\\
&+(-2\pi i\alpha n)^{-m}\int_1^{\infty}\phi_m(x,s)e(\alpha nx)x^{-s-m}\,dx.
\end{align*}
Substituting this back into the above, we have
\begin{align*}
\frac{(2\pi)^s}{\Gamma(s)}\int_0^\infty A(\alpha+iy)y^s\frac{dy}y
&=\sum_{j=0}^{m-1}\frac{\phi_j(1,s)}{(-2\pi i\alpha)^{j+1}}
L_f(s+j+1,\alpha)\\
&+(-2\pi i\alpha)^{-m}
\sum_{n=1}^{\infty}\frac{a_f(n)}{n^{s+m}}
\int_1^\infty\phi_m(x,s)e(\alpha nx)x^{-s-m}\,dx.
\end{align*}
Each of the terms in the sum over $j$ continues to an entire function
of $s$. On the other hand, it is straightforward to prove that
$\phi_m(x,s)\ll_m(1+|s|)^mx^{k-1}$.
Thus, the final sum over $n$ is holomorphic for
$\Re(s)>k-m$.  Taking $m$ arbitrarily large establishes the lemma.
\end{proof}

\begin{lemma}
\label{Fbarlemma}
Let $\alpha\in\Q^\times$ and $z=\alpha+iy$
for some $y\in\bigl(0,\frac{|\alpha|}4\bigr]$. Then
\begin{equation}
\label{fbartaylor}
\begin{aligned}
\epsilon&(-i\sqrt{N}z)^{-k}\overline{F}\!\left(-\frac1{Nz}\right)\\
&=O_{\alpha,M}(y^{M-\lfloor\frac{k+3}2\rfloor})
+\epsilon N^{-\frac{k}2}\sum_{m=0}^{M-1}
\frac{(-i\alpha)^{-m-k}}{2\pi i}\int_{\Re(s)=\frac{k}2+1}
{{s+m-k}\choose{m}}
\left(\frac{N\alpha^2}{2\pi}\right)^{s+m}\\
&\hspace{8cm}\cdot
\Gamma(s+m)
D_{\bar{f}}\left(s+m,-\frac1{N\alpha}\right)y^{-s}\,ds
\end{aligned}
\end{equation}
for every $M\in\Z_{\ge0}$.
\end{lemma}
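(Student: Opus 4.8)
My plan is to derive a Mellin--Barnes representation of the left-hand side, expand the integrand in powers of $y$, and read off the main sum together with a controlled remainder. Writing $z=\alpha+iy$, the key algebraic identity is
\[
-\frac{1}{Nz}=-\frac{1}{N\alpha}+\frac{iy}{N\alpha z}.
\]
Setting $\tilde\eta=\frac{y}{N\alpha z}$, this lets me factor the exponential in the definition of $\overline{F}$, giving
\[
\overline{F}\!\left(-\frac1{Nz}\right)=\sum_{n=1}^\infty c_{\bar f}(n)\,e\!\left(-\frac{n}{N\alpha}\right)e^{-2\pi n\tilde\eta}.
\]
A short computation shows $\Re(\tilde\eta)=\frac{y}{N(\alpha^2+y^2)}>0$ regardless of the sign of $\alpha$, so each term admits the Mellin representation $e^{-X}=\frac1{2\pi i}\int_{\Re(s)=\sigma}\Gamma(s)X^{-s}\,ds$ with $X=2\pi n\tilde\eta$. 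Applying this on the line $\Re(s)=\frac k2+1$, where $\sum_n|c_{\bar f}(n)|n^{-\Re(s)}$ converges, and interchanging sum and integral, I obtain
\[
\overline{F}\!\left(-\frac1{Nz}\right)=\frac1{2\pi i}\int_{\Re(s)=\frac k2+1}\Gamma(s)(2\pi\tilde\eta)^{-s}D_{\bar f}\!\left(s,-\tfrac1{N\alpha}\right)ds.
\]

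Next I would multiply by $\epsilon(-i\sqrt{N}z)^{-k}=\epsilon(-i)^{-k}N^{-k/2}z^{-k}$, which is unambiguous since $k\in\Z$, and substitute $\tilde\eta^{-s}=y^{-s}(N\alpha z)^s$. The essential bookkeeping is to group the resulting powers so as to avoid branch pathologies when $\alpha<0$. Since $N\alpha z=N\alpha^2\bigl(1+\tfrac{iy}{\alpha}\bigr)$ with $N\alpha^2>0$ and $\bigl|\arg(1+\tfrac{iy}{\alpha})\bigr|$ small, the principal logarithm splits cleanly, so that $(N\alpha z)^s=(N\alpha^2)^s\bigl(1+\tfrac{iy}{\alpha}\bigr)^s$; likewise $z^{-k}=\alpha^{-k}\bigl(1+\tfrac{iy}{\alpha}\bigr)^{-k}$ as an integer power. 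Thus all of the $z$-dependence collapses to $\bigl(1+\tfrac{iy}{\alpha}\bigr)^{s-k}$ times the clean factors $(N\alpha^2)^s$ and $\alpha^{-k}$. I would then invoke the binomial expansion
\[
\Bigl(1+\tfrac{iy}{\alpha}\Bigr)^{s-k}=\sum_{m=0}^{M-1}\binom{s-k}{m}\Bigl(\tfrac{iy}{\alpha}\Bigr)^m+R_M(s),
\]
valid because $|iy/\alpha|=y/|\alpha|\le\tfrac14<1$.

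For the $m$-th main term I would reindex $s\mapsto s+m$, which sends the factor $y^{m-s}$ to $y^{-s}$ and the binomial to $\binom{s+m-k}{m}$, and collapse the constant using $(-i)^{-k}i^m=(-i)^{-k-m}$, so that $(-i)^{-k}i^m\alpha^{-k-m}=(-i\alpha)^{-k-m}$. The reindexed term then lives on $\Re(s)=\frac k2+1-m$, and I would shift it back to $\Re(s)=\frac k2+1$. No poles are crossed, since throughout the strip $\Re(s+m)\ge\frac k2+1$ keeps the argument of $\Gamma$ in its domain and that of $D_{\bar f}$ to the right of its abscissa of convergence, while the rapid vertical decay of $\Gamma$ justifies the shift. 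This reproduces exactly the displayed sum in \eqref{fbartaylor}.

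The main obstacle is the remainder estimate. After the manipulations above, the remainder equals a constant times
\[
\frac1{2\pi i}\int\Gamma(s)\Bigl(\tfrac{N\alpha^2}{2\pi}\Bigr)^sR_M(s)\,y^{-s}D_{\bar f}\!\left(s,-\tfrac1{N\alpha}\right)ds,
\]
and the delicate point is that $R_M(s)$ \emph{grows} along vertical lines. From $\bigl|\arg(1+\tfrac{iy}{\alpha})\bigr|\le\arctan\tfrac{1}{4}$ one gets a bound of the shape $|R_M(s)|\ll_M(y/|\alpha|)^M(1+|s|)^{O(1)}\exp\bigl(\arctan\tfrac{1}{4}\cdot|\Im(s)|\bigr)$. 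This is precisely where the hypothesis $y\le|\alpha|/4$ is used: because $\arctan\tfrac{1}{4}<\tfrac{\pi}{2}$, the growth of $R_M(s)$ is dominated by the $e^{-\pi|\Im(s)|/2}$ decay of $\Gamma(s)$, so the integral converges absolutely. Bounding $|y^{-s}|=y^{-\Re(s)}$ and taking the contour at $\Re(s)=\lfloor\tfrac{k+3}{2}\rfloor$---the smallest integer exceeding the abscissa of convergence $\tfrac{k+1}{2}$ of $D_{\bar f}$---then gives the stated bound $O_{\alpha,M}\bigl(y^{M-\lfloor(k+3)/2\rfloor}\bigr)$, the factor $|\alpha|^{-M}$ being absorbed into the implied constant. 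Making this growth-versus-decay estimate uniform and rigorous is the crux; everything else reduces to routine manipulation of absolutely convergent integrals.
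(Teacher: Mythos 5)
Your proof is correct, but it takes a genuinely different route from the paper's. The paper decomposes $-\frac1{Nz}=\beta+i|\beta u|-\frac{\beta u^2}{1+iu}$ (with $\beta=-1/N\alpha$, $u=y/\alpha$) so as to isolate a \emph{real} decay factor $e^{-2\pi n|\beta u|}$, then expands $(1+iu)^{-k}e\bigl(-n\beta u^2/(1+iu)\bigr)$ as a power series in $u$ whose coefficients are polynomials in $n|\beta u|$; this forces a two-index rearrangement, an elementary tail estimate with an auxiliary parameter $K=\lfloor\frac{k-1}2\rfloor+2$ to make the $n$-sum converge against $e^{2\pi n|\beta u|}$, and then a conversion of each surviving term into a Mellin--Barnes integral via $\frac{y^{j+m}}{j!}\frac{d^j}{dy^j}$ acting on the inverse Mellin transform, with the Chu--Vandermonde identity collapsing the inner $j$-sum into the single binomial $\binom{s+m-k}{m}$. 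You instead write $-\frac1{Nz}=-\frac1{N\alpha}+i\,\frac{y}{N\alpha z}\cdot\frac{1}{i}\cdot i$, i.e.\ $\tilde\eta=\frac{y}{N\alpha z}$ with $\Re\tilde\eta>0$, represent the entire twisted sum at once by the Cahen--Mellin integral with complex argument, and perform the binomial expansion of $(1+iy/\alpha)^{s-k}$ with remainder \emph{inside} the $s$-integral; your reindexing $s\mapsto s+m$ then produces $\binom{s+m-k}{m}$ and the constant $(-i\alpha)^{-m-k}$ for free, so the paper's combinatorics (double series, the $K$-trick, Chu--Vandermonde) is traded for analytic bookkeeping: the branch splittings you verify, and the crux estimate that $|R_M(s)|\ll_M (y/|\alpha|)^M(1+|s|)^M e^{\arctan(1/4)|\Im s|}$ is dominated by the $e^{-\frac{\pi}2|\Im s|}$ decay of $\Gamma(s)$ --- this is exactly where $y\le\frac{|\alpha|}4$ enters your argument, playing the role it plays in the paper's convergent tail $\sum_{m\ge M}(3|u|)^m$. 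Your contour shifts are legitimate, since throughout the relevant strips $\Re(s+m)\ge\frac k2+1>\frac{k+1}2$ keeps $\Gamma(s+m)$ and $D_{\bar f}(s+m,-\frac1{N\alpha})$ holomorphic and the integrands rapidly decaying, and evaluating the remainder on $\Re(s)=\lfloor\frac{k+3}2\rfloor$ reproduces exactly the stated exponent $M-\lfloor\frac{k+3}2\rfloor$ (on the natural line $\Re(s)=\frac k2+1$ you would in fact get the marginally stronger $y^{M-\frac k2-1}$ for odd $k$, which also suffices since $y\le\frac{|\alpha|}4$ is bounded). The only point to tighten is your $(1+|s|)^{O(1)}$: make it explicit, e.g.\ as $(1+|s|)^M$ via the integral form of the Taylor remainder $R_M=\binom{s-k}{M}w^M M\int_0^1(1-t)^{M-1}(1+tw)^{s-k-M}\,dt$ with $|1+tw|\ge\frac34$, but that is routine. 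In sum, your argument is a valid and arguably cleaner alternative, at the cost of the careful growth-versus-decay analysis along vertical lines that the paper's termwise expansion avoids.
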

\begin{proof}
This was essentially done in \cite[\S2]{bk-converse}; we reproduce
the argument here for the sake of completeness.
Let $z=\alpha+iy$, $\beta=-1/N\alpha$ and $u=y/\alpha$. Then
$$
-\frac1{Nz}=\beta+i|\beta u|-\frac{\beta u^2}{1+iu},
$$
so that
$$
\epsilon(-i\sqrt{N}z)^{-k}\overline{F}\!\left(-\frac1{Nz}\right)
=\epsilon (-i\sqrt{N}\alpha)^{-k}
\sum_{n=1}^\infty c_{\bar{f}}(n)e(\beta n)
e^{-2\pi n|\beta u|}
(1+iu)^{-k}e\!\left(-\frac{n\beta u^2}{1+iu}\right).
$$
Next,
\begin{align*}
(1+iu)^{-k}e\!\left(-\frac{n\beta u^2}{1+iu}\right)
&=\sum_{j=0}^\infty(-iu)^j(1+iu)^{-j-k}\frac{(-2\pi n|\beta u|)^j}{j!}\\
&=\sum_{j=0}^\infty\sum_{\ell=0}^\infty
{{j+k+\ell-1}\choose\ell}(-iu)^{j+\ell}\frac{(-2\pi n|\beta u|)^j}{j!}\\
&=\sum_{m=0}^\infty(-iu)^m\sum_{j=0}^m
{{m+k-1}\choose{m-j}}\frac{(-2\pi n|\beta u|)^j}{j!}.
\end{align*}
Note further that for any $M, K\in\Z_{\ge0}$ we have
$$
\begin{aligned}
&\hspace{-1cm}\left|\sum_{m=M}^{\infty}(-iu)^m
\sum_{j=0}^m{{m+k-1}\choose{m-j}}
\frac{(-2\pi n|\beta u|)^j}{j!}\right|\\
&\le (2\pi n|\beta u|)^{-K}K!
\sum_{m=M}^{\infty}|u|^m\sum_{j=0}^m{{m+k-1}\choose{m-j}}{{j+K}\choose{j}}
\frac{(2\pi n|\beta u|)^{j+K}}{(j+K)!}\\
&\le (\pi n|\beta u|)^{-K}K!(3/2)^{k-1}
\sum_{m=M}^{\infty}(3|u|)^me^{2\pi n|\beta u|}\\
&\ll_{\alpha,M,K}|u|^{M-K}n^{-K}e^{2\pi n|\beta u|},
\end{aligned}
$$
since $|u|\le1/4$.  Hence, substituting the definition of $u$, we have
$$
\begin{aligned}
&\epsilon(-i\sqrt{N}z)^{-k}\overline{F}\!\left(-\frac1{Nz}\right)
=O_{\alpha,M,K}\Bigl(y^{M-K}\sum_{n=1}^\infty|c_f(n)|n^{-K}\Bigr)\\
&+\epsilon(-i\sqrt{N}\alpha)^{-k}
\sum_{m=0}^{M-1}\left(\frac{-iy}{\alpha}\right)^m
\sum_{j=0}^m{{m+k-1}\choose{m-j}}
\sum_{n=1}^\infty c_{\bar{f}}(n)e(\beta n)
\frac1{j!}\left(-\frac{2\pi ny}{N\alpha^2}\right)^j
e^{-\frac{2\pi ny}{N\alpha^2}}.
\end{aligned}
$$
Choosing $K=\lfloor\frac{k-1}2\rfloor+2$,
the error term converges and gives the
estimate $O_{\alpha,M}(y^{M-K})$.

As for the other terms, we have
\begin{align*}
y^m\sum_{n=1}^\infty&c_{\bar{f}}(n)e(\beta n)
\frac1{j!}\left(-\frac{2\pi ny}{N\alpha^2}\right)^j
e^{-\frac{2\pi ny}{N\alpha^2}}
=\frac{y^{j+m}}{j!}\frac{d^j}{dy^j}
\sum_{n=1}^\infty c_{\bar{f}}(n)e(\beta n)
e^{-\frac{2\pi ny}{N\alpha^2}}\\
&=\frac{y^{j+m}}{j!}\frac{d^j}{dy^j}
\frac1{2\pi i}\int_{\Re(s)=m+\frac{k}2+1}
\left(\frac{N\alpha^2}{2\pi}\right)^s\Gamma(s)
D_{\bar{f}}(s,\beta)y^{-s}\,ds\\
&=\frac1{2\pi i}\int_{\Re(s)=\frac{k}2+1}
{{-s-m}\choose{j}}
\left(\frac{N\alpha^2}{2\pi}\right)^{s+m}\Gamma(s+m)
D_{\bar{f}}(s+m,\beta)y^{-s}\,ds.
\end{align*}
Moreover, by the Chu--Vandermonde identity we have
$$
\sum_{j=0}^m{{m+k-1}\choose{m-j}}{{-s-m}\choose{j}}
={{-s+k-1}\choose{m}}=(-1)^m{{s+m-k}\choose{m}}.
$$
Collecting these strands together, we arrive at \eqref{fbartaylor}.
\end{proof}

\begin{lemma}
\label{Blemma}
For any $\alpha\in\Q^\times$ there are numbers
$P_j(\alpha)$, $j=0,1,2,\ldots$, such that
$$
B(\alpha+iy)=\sum_{j=0}^{M-1}P_j(\alpha)y^j+O_{\alpha,M}(y^M)
$$
for all $M\in\Z_{\ge0}$ and $y\in\bigl(0,\frac{|\alpha|}4\bigr]$.
\end{lemma}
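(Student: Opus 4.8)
The plan is to split $B=B_1+B_2$, where $B_1$ is the integral over the compact contour $\mathcal{C}$ and $B_2$ is the integral along $\Re(s)=k-\frac12$, and to treat the two pieces by completely different mechanisms. For $B_1$, I would first observe that $-iz=y-i\alpha$, so for $z=\alpha+iy$ with $\alpha\ne0$ the point $-iz$ lies in the right half-plane and stays bounded away from the branch cut $(-\infty,0]$ as $y\to0^+$; hence $(-iz)^{-s}$ is holomorphic in $z$ in a neighborhood of $z=\alpha$. Since $\mathcal{C}$ encloses the poles of $\Delta_f$ but avoids them, the integrand is continuous and bounded on the compact contour, so differentiating under the integral sign shows that $B_1(\alpha+iy)$ is holomorphic in $y$ near $0$. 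Its Taylor expansion at $y=0$ supplies the coefficients $P_j^{(1)}(\alpha)$ together with a remainder $O_{\alpha,M}(y^M)$, and this part is essentially immediate.

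The substance is in $B_2$. Writing $w=-i\alpha$, I would use $-iz=w+y=w(1+y/w)$, so that $(-iz)^{-s}=w^{-s}(1+y/w)^{-s}$. Since $|y/w|=y/|\alpha|\le\frac14$ on the stated range of $y$, I would expand by the binomial theorem, $(1+y/w)^{-s}=\sum_{m=0}^{M-1}\binom{-s}{m}(y/w)^m+R_M$, and substitute term by term. The main terms produce $\sum_{m=0}^{M-1}P_m^{(2)}(\alpha)y^m$ with
$$
P_m^{(2)}(\alpha)=\frac{w^{-m}}{2\pi i}\int_{\Re(s)=k-\frac12}\frac{\pi^2}{\sin^2(\pi s)}\Lambda_f(s)\binom{-s}{m}w^{-s}\,ds,
$$
each of which converges absolutely: the factor $\pi^2/\sin^2(\pi s)$ decays like $e^{-2\pi|\Im s|}$ and dominates the at-most-exponential growth of $\Lambda_f(s)w^{-s}\binom{-s}{m}$. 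Finally I would set $P_j(\alpha)=P_j^{(1)}(\alpha)+P_j^{(2)}(\alpha)$; since neither family of coefficients depends on $M$, the expansion then holds for every $M$ simultaneously.

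The main obstacle is the uniform remainder bound, namely that $\frac1{2\pi i}\int\frac{\pi^2}{\sin^2(\pi s)}\Lambda_f(s)w^{-s}R_M\,ds=O_{\alpha,M}(y^M)$. Here I would use the integral form of the binomial remainder, $R_M=M\binom{-s}{M}\int_0^{y/w}(y/w-\tau)^{M-1}(1+\tau)^{-s-M}\,d\tau$, to get $|R_M|\ll_M(1+|s|)^M(y/|\alpha|)^M\sup_\tau|(1+\tau)^{-s-M}|$, where $\tau$ ranges over the nearly vertical segment from $0$ to $y/w$. On that segment $|1+\tau|\ge\frac34$ and $\arg(1+\tau)$ is bounded by $\arctan\frac14$, so $|(1+\tau)^{-s-M}|\ll_M e^{|\Im s|/4}$. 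The delicate point is that $|w^{-s}|=|(-i\alpha)^{-s}|\ll_\alpha e^{\pi|\Im s|/2}$ also grows exponentially; I would check that, after combining this with the Stirling decay $e^{-\pi|\Im s|/2}$ inside $\Lambda_f$, the polynomial bound for $L_f$ on vertical lines, and the decay of $\pi^2/\sin^2(\pi s)$, the net exponential factor is $e^{(-2\pi+1/4)|\Im s|}$, which still decays. This makes the remainder integral converge absolutely with the factor $y^M$ pulled outside, establishing the $O_{\alpha,M}(y^M)$ estimate and completing the proof.
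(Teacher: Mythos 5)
Your proposal is correct and takes essentially the same route as the paper: both expand $(-iz)^{-s}=(-i\alpha)^{-s}(1+iy/\alpha)^{-s}$ binomially in $y$, keep $M$ terms with $M$-independent coefficients $P_j(\alpha)$, and beat the exponential growth of $(-i\alpha)^{-s}$ and the remainder in $|\Im s|$ against the $e^{-2\pi|\Im s|}$ decay of $\pi^2/\sin^2(\pi s)$ (plus Stirling and polynomial growth of $L_f$) on the line $\Re(s)=k-\frac12$, handling the compact contour $\mathcal{C}$ trivially. The only differences are cosmetic: the paper bounds the binomial tail crudely via $\bigl|\binom{-s}{j}\bigr|\le 2^{|s|+j}$ and the observation $e^{2\pi}>2$, where you use the integral form of the Taylor remainder to get the sharper factor $e^{(-2\pi+\frac14)|\Im s|}$, and you treat the $\mathcal{C}$-integral by holomorphy in $y$ rather than by the same term-by-term expansion.
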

\begin{proof}
For $z=\alpha+iy$, we have
\begin{equation}
\label{izstaylor}
(-iz)^{-s}=e^{i\frac{\pi}2\sgn(\alpha)s}|\alpha|^{-s}
\left(1+\frac{iy}{\alpha}\right)^{-s}
=e^{i\frac{\pi}2\sgn(\alpha)s}|\alpha|^{-s}
\sum_{j=0}^\infty {{-s}\choose{j}}
\left(\frac{iy}{\alpha}\right)^j.
\end{equation}
Since $y\le\frac{|\alpha|}4$, the crude bound
$$
\left|{{-s}\choose{j}}\right|
=\left|{{s+j-1}\choose{j}}\right|\le 2^{|s|+j}
$$
yields
$$
\sum_{j=M}^{\infty}
{{-s}\choose{j}}\left(\frac{iy}{\alpha}\right)^j
\ll_{\alpha,M} 2^{|s|}y^M.
$$
Hence, if we truncate the sum in \eqref{izstaylor} at $M$ and substitute
it for $(-iz)^{-s}$ in the definition of $B(z)$, then
since the contour $\mathcal{C}$ is compact, the first integral
of the error term
converges to give an $O_{\alpha,M}(y^M)$ error overall.
Similarly, by standard estimates, along the line $\Re(s)=k-\frac12$
the function
$e^{i\frac{\pi}2\sgn(\alpha)s}|\alpha|^{-s}\Lambda_f(s)$
has at most polynomial growth,
and $\frac{\pi^2}{\sin^2(\pi s)}\ll e^{-2\pi|s|}$.
Since $e^{2\pi}>2$, the second integral of the error term converges as
well, and the lemma follows with
\begin{align*}
P_j(\alpha)&=
\frac1{2\pi i}\int_{\mathcal{C}}
(-i\alpha)^{-j}e^{i\frac{\pi}2\sgn(\alpha)s}|\alpha|^{-s}{{-s}\choose j}
\Delta_f(s)\,ds\\
&+\frac1{2\pi i}\int_{\Re(s)=k-\frac12}
(-i\alpha)^{-j}e^{i\frac{\pi}2\sgn(\alpha)s}|\alpha|^{-s}{{-s}\choose j}
\Lambda_f(s)\frac{\pi^2}{\sin^2(\pi s)}\,ds.
\end{align*}
\end{proof}

Now, to conclude the proof, let us define
\begin{align*}
g(y)&=F(\alpha+iy)+A(\alpha+iy)
-\sum_{j=0}^{M-1}P_j(\alpha)y^j\chi_{(0,|\alpha|/4]}(y)\\
&-\epsilon N^{-\frac{k}2}\sum_{m=0}^{M-1}
\frac{(-i\alpha)^{-m-k}}{2\pi i}\int_{\Re(s)=\frac{k}2+1}
{{s+m-k}\choose{m}}
\left(\frac{N\alpha^2}{2\pi}\right)^{s+m}\\
&\hspace{6cm}\cdot\Gamma(s+m)
D_{\bar{f}}\left(s+m,-\frac1{N\alpha}\right)y^{-s}\,ds,
\end{align*}
where $\chi_{(0,|\alpha|/4]}(y)=1$
if $y\in\bigl(0,\frac{|\alpha|}4\bigr]$
and $0$ otherwise.
Combining Lemmas~\ref{mainidlemma}, \ref{Fbarlemma} and \ref{Blemma},
we have that
$g(y)=O_{\alpha,M}(y^{M-\lfloor\frac{k+3}2\rfloor})$
for $y\in\bigl(0,\frac{|\alpha|}4\bigr]$.
On the other hand, it is easy to see that $g(y)$ decays rapidly
as $y\to\infty$. Thus,
$\frac{(2\pi)^s}{\Gamma(s)}\int_0^\infty g(y)y^{s-1}\,dy$
defines a holomorphic function for
$\Re(s)>\lfloor\frac{k+3}2\rfloor-M$.

Note that
$$
\frac{(2\pi)^s}{\Gamma(s)}\int_0^{\infty}\sum_{j=0}^{M-1}P_j(\alpha)
y^j\chi_{(0,|\alpha|/4]}(y)y^s\frac{dy}y
=\frac{(2\pi)^s}{\Gamma(s)}\sum_{j=0}^{M-1}P_j(\alpha)
\frac{(|\alpha|/4)^{s+j}}{s+j}
$$
extends to an entire function of $s$.
Together with \eqref{Fmellin} and Lemma~\ref{Alemma},
this shows that
\eqref{maineq} is holomorphic for
$\Re(s)>\lfloor\frac{k+3}2\rfloor-M$.
Finally, we replace $M$ by $M+1$ and discard the final term
of the sum over $m$ to see that \eqref{maineq}
is in fact holomorphic for $\Re(s)>\frac{k+1}2-M$.
\qed

\bibliographystyle{amsplain}
%\bibliography{simple}

\begin{thebibliography}{1}

\bibitem{bk-converse}
Andrew~R. Booker and M.~Krishnamurthy, \emph{A strengthening of the {${\rm
  GL}(2)$} converse theorem}, Compos. Math. \textbf{147} (2011), no.~3,
  669--715. \MR{2801397 (2012f:11090)}

\bibitem{bk-weil}
\bysame, \emph{{W}eil's converse theorem with poles}, preprint, 2011.

\bibitem{cho}
Peter~Jaehyun Cho, \emph{Simple zeros of {M}aass {$L$}-functions}, Int. J.
  Number Theory \textbf{9} (2013), no.~1, 167--178.

\bibitem{conrey-ghosh}
J.~B. Conrey and A.~Ghosh, \emph{Simple zeros of the {R}amanujan
  {$\tau$}-{D}irichlet series}, Invent. Math. \textbf{94} (1988), no.~2,
  403--419. \MR{958837 (89k:11078)}

\bibitem{cgg}
J.~B. Conrey, A.~Ghosh, and S.~M. Gonek, \emph{Simple zeros of the zeta
  function of a quadratic number field. {I}}, Invent. Math. \textbf{86} (1986),
  no.~3, 563--576. \MR{860683 (87m:11114)}

\bibitem{jacquet-shalika}
Herv{\'e} Jacquet and Joseph~A. Shalika, \emph{A non-vanishing theorem for zeta
  functions of {${\rm GL}_{n}$}}, Invent. Math. \textbf{38} (1976/77), no.~1,
  1--16. \MR{0432596 (55 \#5583)}

\bibitem{kp}
Jerzy Kaczorowski and Alberto Perelli, \emph{On the structure of the {S}elberg
  class, {VII}: {$1<d<2$}}, Ann. of Math. (2) \textbf{173} (2011), no.~3,
  1397--1441. \MR{2800717}

\end{thebibliography}
\providecommand{\bysame}{\leavevmode\hbox to3em{\hrulefill}\thinspace}
\providecommand{\MR}{\relax\ifhmode\unskip\space\fi MR }
% \MRhref is called by the amsart/book/proc definition of \MR.
\providecommand{\MRhref}[2]{%
  \href{http://www.ams.org/mathscinet-getitem?mr=#1}{#2}
}
\providecommand{\href}[2]{#2}

\end{document}